\newtheorem{theorem}{Theorem}[section]
\newtheorem{lemma}[theorem]{Lemma}
\newtheorem{corollary}[theorem]{Corollary}
\newtheorem{proposition}[theorem]{Proposition}
\newtheorem{remark}[theorem]{Remark}
\newtheorem{definition}[theorem]{Definition}
\title{\vspace*{-5mm} Classification of eventually periodic subshifts}
\author{\sc{Benjam\'in A.~Itz\'a-Ortiz}}
\address{Universidad Aut\'onoma del Estado de Hidalgo, Centro de Investigaci\'on en Matem\'aticas, Pachuca, Hidalgo 42180, \sc{Mexico}}
\email{itza@uaeh.edu.mx}
\author{\sc{Meghan B.~Malachi}}
\address{Providence College, Department of Mathematics and Computer Science, Providence, RI 02918, \sc{USA}}
\email{mmalachi@friars.providence.edu}
\author{\sc{Austin Marstaller}}
\address{The University of Texas at Dallas, Department of Mathematical Sciences, Richardson, TX 75080, \sc{USA}}
\email{Austin.Marstaller@utdallas.edu}
\author{\sc{Jason Saied}}
\address{ Lafayette College, Mathematics Department, Easton PA, 18042,  \sc{USA}}
\email{saiedj@lafayette.edu}
\author{\sc{Sara Stover}}
\address{Mercer University, Department of Mathematics, Macon GA 31207, \sc{USA}}
\email{sara.mae.stover@live.mercer.edu}
\begin{document}

\maketitle

\begin{abstract}
We provide a classification of eventually periodic subshifts up to conjugacy and flow equivalence. We use our results to prove that each skew Sturmian subshift is conjugate to exactly one other skew Sturmian subshift and that all skew Sturmian subshifts are flow equivalent to one another.
\end{abstract}

\section*{Introduction}

In this paper we study non-periodic bi-infinite sequences of symbols that, when some finite word is removed, yield periodic bi-infinite sequences. We term these eventually periodic sequences and define an anomaly word as the word that, when removed, leaves the sequence periodic. By considering the closure of the orbit of a given bi-infinite sequence, we obtain a subshift, which is a symbolic dynamical system. It is customary to name a subshift after the sequence from which it is created; in this fashion, we refer to periodic subshifts, eventually periodic subshifts, and so on.  While the classification of periodic subshifts is well understood and even straightforward, 
eventually periodic subshifts are  strictly sofic, so their classification is somewhat more elaborate.  We prove that the conjugacy class of an eventually periodic subshift depends only on the size of the anomaly and the least period of its corresponding periodic sequence and also that, as is the case for periodic subshifts, there is a single flow equivalence class of eventually periodic subshifts.

Motivation for this work was given by the skew Sturmian sequences introduced in \cite{hedlund}, which are a particular class of eventually periodic sequences. Sturmian sequences, especially those which are not periodic nor eventually periodic, have attracted the attention of many mathematicians \cite{berthe,clark,darnell,fogg,kurka,lothaire}. However,  skew Sturmian sequences are important to study since they appear naturally in the generalization of properties of Sturmian sequences \cite{berstel,b-v,glen,pirillo}. By showing in this paper how to compute the size of an anomaly word of a skew Sturmian sequence, we are able to apply our results to obtain a characterization for conjugacy of skew Sturmian subshifts. We show that all skew Sturmian subshifts have a conjugacy class of size two, regardless of their least period. In contrast, we also show that all skew Sturmian subshifts belong to the same flow equivalence class, since they are all eventually periodic.

We divide this work into three sections. In the first section we introduce the necessary notions and notations required in the paper. In the second  section we state the classification results concerning conjugacy and flow equivalence of eventually periodic subshifts. Finally, in the last section we apply our results to classify skew Sturmian subshifts.

This research was carried out while the authors were participating in the Research Experience for Undergraduates program at the Califonia State Unversity Channel Islands (CSUCI), supported by NSF grant DMS-1359165.

\section{Preliminaries}
In this section we introduce the necessary terminology used in the paper. For the symbolic dynamics notation and basic results we follow  \cite{lind}.
Throughout this paper $\mathcal A$ denotes a finite set, called an alphabet, and its elements are referred as its  symbols or letters. The space ${\mathcal A}\sp\mathbb{Z}$ of all bi-infinite sequences in $\mathcal A$ is a metric space referred to as the full shift. We view elements in $\mathcal A$ as sequences $x=(x_i)$ with $i\in\mathbb Z$ and $x_i\in\mathcal A$. By a word (or block) of length $n$ we mean a finite sequence of $n$ symbols of ${\mathcal A}\sp\mathbb{Z}$. It is customary to denote by $|w|$ the length of a word $w$. Given $x\in{\mathcal A}\sp\mathbb{Z}$ we say that a word $w$ occurs (or is allowed) in $x$ if  there is an $i\in\mathbb Z$ and an $n\geq 0$ such that $w=x_ix_{i+1}\cdots x_{i+n}=x_{[i,i+n]}$. In addition, we say that a word $w$ is forbidden in $x$ if it does not occur in $x$.
Naturally, a subword of a word $w$ is a word itself that occurs in $w$.

The shift map  $\sigma\colon{\mathcal A}\sp\mathbb{Z}\to{\mathcal A}\sp\mathbb{Z}$ defined by $\sigma(x)_i=x_{i+1}$ for $i\in\mathbb Z$ is a homeomorphism implementing a  natural action of $\mathbb Z$ on ${\mathcal A}\sp\mathbb{Z}$. A subshift or shift space $X$ is a subset of ${\mathcal A}\sp\mathbb{Z}$ that is both closed and $\sigma$-invariant. Equivalently,  a subset  $X$ of the full shift is a subshift if there is a set of words $\mathcal F$ such that $X$ consists of every  bi-infinite sequence for which the words in $\mathcal F$ are forbidden.

If $x, y\in{\mathcal A}\sp\mathbb{Z}$ and there exists $k\in\mathbb{Z}$ with $\sigma^k(x) = y$, then we say that $x$ and $y$ are similar sequences.

We say that $x\in{\mathcal A}\sp\mathbb{Z}$ is periodic with period $N>0$ if $\sigma^N(x)=x$. If $N$ is the least integer satisfying this property then we say that $N$ is the least period of $x$. In particular, when $N=1$ we say that $x$ is a fixed point. If $x$ is a periodic bi-infinite sequence and $w$ is an allowed word of $x$ of length $N$, where $N$ is the least period of $x$, then there is $\ell\in\mathbb Z$ such that $x_{[\ell+kN,\ell+(k+1)N-1]}=w$ for all $k\in\mathbb Z$. We shall refer to $w$ as a repeating or periodic word for $x$. Notice that the $N$ allowed different repeating words for $x$ are exactly the $N$ possible repeating words for any sequence similar to $x$.

\begin{definition}
\normalfont We say that a non-periodic bi-infinite sequence $x\in{\mathcal A}\sp\mathbb{Z}$ is eventually periodic if there exist integers $i$ and $n>0$ so that by removing the allowed word $v = x_{[i+1,i+n]}$
from $x$ the resulting bi-infinite sequence $p_v(x)$, defined as
$$p_v(x)_k=\begin{cases}
 x_k & \text{ if } k\leq i \\
 x_{k+n} & \text{ if } k>i,
\end{cases}$$ is a periodic sequence. We call $v$ an anomaly word for $x$, and will say that $|v|$ is the anomaly size of $x$ with respect to $v$.
\end{definition}

 We remark that our definition is slighly different to the ones we were able to find in the literature, for instance, \cite[p.~156]{fogg} allows the sequence preceding and following the allowed word to have different periods; however, for the case of skew Sturmian sequences, using \cite[Exercise~6.2.3, p.~157]{fogg} we see that our defintion of eventually periodic bi-infinite sequence is equivalent to the given there.

We first prove that the least period of an eventually periodic sequence $x$ is independent of the anomaly word.

\begin{lemma}\label{similarperiodic}
Let $x$ be an eventually periodic sequence and suppose that $u$ and $v$ are anomaly words for $x$. Then $p_u(x)$ is equal to $p_v(x)$.
\end{lemma}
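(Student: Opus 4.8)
The plan is to compare $p_u(x)$ and $p_v(x)$ on the far left, where the deletion of the anomaly word introduces no index shift, and then to promote that agreement to the whole line using periodicity. First I would fix notation by writing the two anomaly words as $v = x_{[i+1,i+n]}$ and $u = x_{[j+1,j+m]}$ and setting $M = \min(i,j)$. Directly from the definitions of $p_v$ and $p_u$, for every index $k \le M$ we have $p_v(x)_k = x_k$ and $p_u(x)_k = x_k$, hence $p_u(x)_k = p_v(x)_k$ for all $k \le M$. The point of comparing on the left tail (rather than the right) is that there both modified sequences literally coincide with $x$, so no bookkeeping is required; on the right tails the two formulas involve the distinct shifts $n$ and $m$, which would make a direct comparison awkward.

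The key step will be the following elementary fact: two periodic bi-infinite sequences that agree on a left-infinite ray $(-\infty, M]$ must be equal everywhere. To establish it, let $N_u$ and $N_v$ be periods of $p_u(x)$ and $p_v(x)$ respectively and put $L = N_u N_v$, which is a common period of both. Given an arbitrary index $k$, I would choose an integer $t \ge 0$ with $k - tL \le M$; since $L$ is a period of each sequence, $p_u(x)_k = p_u(x)_{k - tL}$ and $p_v(x)_k = p_v(x)_{k - tL}$, and these two values agree because $k - tL \le M$ lies in the region of coincidence. Therefore $p_u(x)_k = p_v(x)_k$ for every $k$, which is exactly the conclusion $p_u(x) = p_v(x)$.

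The main obstacle I anticipate is purely conceptual rather than computational: one must verify that the region on which $p_u(x)$ and $p_v(x)$ coincide is genuinely a full left-infinite ray and not just a finite block, since it is this infinitude—together with the common period $L$—that forces global equality. Once this is in hand, no case analysis on the relative positions or sizes of $u$ and $v$ is needed, and the least-common-period argument closes the proof. As an immediate consequence, $p_u(x)$ and $p_v(x)$ share the same least period, so the least period of an eventually periodic sequence is well defined independently of the chosen anomaly word.
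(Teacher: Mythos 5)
Your proposal is correct and follows the same route as the paper: both compare $p_u(x)$ and $p_v(x)$ on the left-infinite ray where each coincides with $x$, and then invoke periodicity to conclude global equality. Your write-up merely makes explicit (via the common period $L=N_uN_v$) the final step that the paper leaves implicit in the phrase ``and thus they are equal.''
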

\begin{proof}  
   Since by definition $u$ and $v$ are allowed words in $x$, there exist integers $i$ and $j$ such that $x_i$ and $x_j$ are the symbols preceding $u$ and $v$ in $x$. Let $k=\min\{i,j\}$.
Then $p_u(x)$ and $p_v(x)$ are both periodic sequences which agree for all indices less or equal to $k$ and thus they are equal.
\end{proof}

The previous lemma then allows us to give the following.

\begin{definition}
\normalfont
Let $x$ be an eventually periodic sequence. 
We define the least period of $x$ as the least period of the periodic sequence $p_v(x)$ for some anomaly word $v$ of $x$.
\end{definition}

Just as the repeating word of a periodic sequence is not unique, the anomaly word of an eventually periodic sequence is also not unique. Let $w_1$ and $w_2$ be the repeating words $123$ and $231$, respectively, and let $u$ and $v$ be the anomaly words $00$ and $23001$. Then the sequences $\cdots w_1w_1uw_1w_1\cdots$ and $\cdots w_2w_2vw_2w_2\cdots$ are similar but have different anomaly words. We will show in the next proposition, however, that the lengths of the anomaly words are always congruent modulo the least period.

\begin{proposition}\label{indepOfWord}
Let $x$ be an eventually periodic sequence and suppose that $N$ is the least period of $x$. If  $u$ and $v$ are anomaly  words for $x$ then $|u|\equiv |v| \mod N$.
\end{proposition}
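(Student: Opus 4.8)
The plan is to invoke Lemma \ref{similarperiodic} to identify the two periodic sequences produced by deleting $u$ and $v$, then extract a periodicity relation on a right tail of $x$ and promote it to a genuine period of the underlying periodic sequence. First I would write $u = x_{[i+1,i+n]}$ and $v = x_{[j+1,j+m]}$, where $n = |u|$ and $m = |v|$, so that $x_i$ and $x_j$ are the symbols preceding $u$ and $v$. By Lemma \ref{similarperiodic} the sequences $p_u(x)$ and $p_v(x)$ coincide; call the common periodic sequence $p$, and recall its least period is $N$. Directly from the defining formula, $p_k = x_{k+n}$ for every $k > i$, and $p_k = x_{k+m}$ for every $k > j$.

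Next I would restrict to indices $k > \max\{i,j\}$, where both formulas hold simultaneously, to obtain $x_{k+n} = x_{k+m}$. Rewriting this in terms of $p$, for all sufficiently large $k$ one computes $p_{k+(m-n)} = x_{(k+m-n)+n} = x_{k+m} = p_k$; in other words, $|m-n|$ is a period of some right-infinite tail of $p$. (If $m = n$ the conclusion $|u| \equiv |v| \pmod N$ is immediate, so I may assume $m \neq n$.)

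The \emph{main obstacle}, and the step deserving the most care, is upgrading this one-sided tail periodicity to an honest period of the full bi-infinite sequence $p$. Here I would exploit that $p$ itself has period $N$: given that $p_s = p_{s+(m-n)}$ holds for all $s$ beyond some bound and that $p_s = p_{s+N}$ holds for all $s$, one can shift any index leftward by a sufficiently large multiple of $N$ to land inside the tail, apply the $(m-n)$-relation there, and then shift back by the same multiple of $N$, concluding that $p_s = p_{s+(m-n)}$ for every $s \in \mathbb{Z}$. Thus $m-n$ is a period of $p$.

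Finally, since the set of periods of a bi-infinite periodic sequence is closed under addition and negation, it is precisely the subgroup $N\mathbb{Z}$ generated by the least period $N$; consequently every period is a multiple of $N$. Applying this to the period $m-n$ gives $N \mid (m-n)$, that is, $|u| = n \equiv m = |v| \pmod N$, as claimed.
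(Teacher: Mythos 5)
Your proof is correct, but it takes a genuinely different route from the paper's. The paper argues combinatorially on words: it writes $x=\cdots w_1w_1uw_1w_1\cdots$ and $x=\cdots w_2w_2vw_2w_2\cdots$ for repeating words $w_1=st$ and $w_2=ts$, manufactures the alternative anomaly word $v_1=tus$ of length $|u|+N$ flanked by copies of $w_2$, and then compares $v_1$ with $v$ to conclude $|v_1|=|v|+kN$. You instead work pointwise with indices: from $p_u(x)=p_v(x)=p$ you extract $x_{k+n}=x_{k+m}$ on a right tail, read this as saying $m-n$ is a period of a tail of $p$, promote it to a global period of $p$ by translating with the known period $N$, and finish with the observation that the set of integers stabilizing $p$ under the shift is the subgroup $N\mathbb{Z}$. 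Your version is more elementary and arguably tighter at the final step, since the paper's assertion that $|v_1|=|v|+k|w_2|$ (two anomaly words flanked by the same repeating word must differ in length by a multiple of the period) is stated rather quickly; your subgroup argument makes the divisibility conclusion airtight. What the paper's construction buys in exchange is an explicit recipe ($v_1=tus$) producing anomaly words of every length congruent to $|u|$ modulo $N$, a picture that is useful elsewhere in the paper (e.g.\ in Lemma~\ref{conjToAnom} and Theorem~\ref{floweq}). One cosmetic slip: to land in the right-infinite tail you translate the index by \emph{adding} a large multiple of $N$ (you say ``leftward''), but the computation you describe is the correct one.
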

\begin{proof}
Since $u$ and $v$ are anomaly words, there are integers $i$, $j$, and $m,n>0$ such that $u=x_{[i+1,i+m]}$ and $v=_{[j+1,j+n]}$. If $|u|=|v|$ there is nothing to prove. Therefore we may assume  $|u|\not =|v|$. Without loss of generality, suppose that $|v|<|u|$. Consider the periodic words $w_1=x_{[i-N+1,i]}$ and $w_2=x_{[j-N+1,j]}$ of $p_u(x)$ and $p_v(x)$, respectively. Using Lemma~\ref{similarperiodic}, there are subwords $s$ and $t $ of $w_1$ such that $w_1=st$ and $w_2=ts$. Since $x=\cdots w_1w_1uw_1w_1\cdots$, we verify that the word $v_1=tus$ is an allowed word for $x$; furthermore, $x=\cdots w_2w_2v_1w_2w_2\cdots$. Since $x=\cdots w_2w_2vw_2w_2\cdots$ as well, and because $p_v(x)=p_{v_1}(x)$ by Lemma~\ref{similarperiodic}, it must follow that for some integer $k>0$, $|u|+N=|v_1|=|v|+k|w_2|=|v|+kN$. Thus $|u|\equiv |v| \mod N$, as was to be proven.  
\end{proof} 

It will be useful to identify the smallest anomaly size. We formalize this in the following.

\begin{definition}
 \normalfont  Let $x$ be an eventually periodic sequence with least period $N$. Let $v$ be an anomaly of $x$. We define the anomaly size of $x$ as
\[
  a(x)=\min\left\{|u|
  \colon  u \text{ is an anomaly of } x \right\}.
\]
\end{definition}

Given a subshift $X$ of ${\mathcal A}\sp\mathbb{Z}$ and $n>0$ we denote by $\mathfrak B_n\left(X\right)$ the collection of all words of length $n$ which are allowed in the sequences in $X$. Fix integers $m$ and $n$ so that $-m\leq n$ and let $\Phi \colon\mathfrak B_{m+n+1}\left(X\right)\to \mathcal A$. Then $\Phi$ defines a mapping $\phi\colon X\to {\mathcal A}\sp\mathbb{Z}$, called a sliding block code (with memory $m$ and anticipation $n$), by the formula $\phi\left(x\right)_i=\Phi\left(x_{\left[i-m,i+n\right]}\right)$..

Given  $x\in{\mathcal A}\sp\mathbb{Z}$ the orbit of $x$ is the set ${\mathrm{Orb}}(x)=\left\{\sigma^k(x)\colon k\in \mathbb Z\right\}$, that is to say, the orbit of $x$ is the set consisting of all sequences similar to $x$. The closure $X$ of  ${\mathrm{Orb}}(x)$ is a subshift called the subshift generated by $x$. When $x$ is an eventually periodic bi-infinite sequence, we accordingly call its generated subshift an eventually periodic subshift. It is easy to see that the subshift generated by an eventually periodic sequence consists of the orbit of $x$ and the orbit of $p_v(x)$ for some anomaly word $v$ in $x$.

Recall that two subshifts are said to be conjugate if there is a bijective sliding block code from one to the other. This is equivalent to the definition of conjugacy for topological dynamical systems. Conjugacy is an important equivalence relation among dynamical systems; however, it is often too strong a condition.

Finally, we recall the definition of flow equivalence. Let  $X$ be  a subshift and consider the product $X\times \mathbb R$. In this product space we introduce an equivalence relation defined by $(x,s)\sim(y,t)$ when $y=\sigma^n(x)$ and $t=s-n$, for $n\in\mathbb Z$. We then define the suspension of $X$ as the quotient space $\Sigma X =X\times {\mathbb R}\slash\sim$. 
Consider the the quotient map $\pi\colon X\times \mathbb R\to \Sigma X$. It is customary to denote an equivalence class by $[x,s]$ rather than $\pi(x,s)$. A natural flow on $\Sigma X$ is defined as $t\mapsto [x,s+t]$. Two subshifts $X$ and $Y$ are said to be flow equivalent if there is a homeomorphism from  $\Sigma X$ to $\Sigma Y$ which preserves the orientation of flow lines.

\section{Classification Results}

In this section we prove the classification results. We begin with the conjugacy problem.

\begin{lemma}\label{conjToAnom}
Let $X$ (respectively $Y$) be the subshift generated by the eventually periodic sequence $x$ (respectively $y$) with least period $N$ (respectively $M$). If $X$ and $Y$ are conjugate, then $M = N$ and $a(x)\equiv a(y)\mod N$.
\end{lemma}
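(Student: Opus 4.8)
The plan is to split the statement into its two conclusions, $M=N$ and $a(x)\equiv a(y)\bmod N$, handling the period first and then reducing the anomaly congruence to a single computation describing how a sliding block code acts on the periodic tails of $x$.

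First I would record the structural facts. A conjugacy $\phi\colon X\to Y$ is by definition a bijective sliding block code; as such it is a homeomorphism commuting with $\sigma$, and since $\phi^{-1}$ then also commutes with $\sigma$, both maps send periodic points to periodic points of the same period, so $\phi$ preserves least periods. As noted in the excerpt, $X=\mathrm{Orb}(x)\cup\mathrm{Orb}(p_v(x))$, where the first set consists of non-periodic points and the second is a single periodic orbit all of whose points have least period $N$; the analogous statement holds for $Y$ with $M$. Since $\phi$ must carry the unique periodic orbit of $X$ onto the unique periodic orbit of $Y$ while preserving least periods, I obtain $M=N$. The same homeomorphism sends $\overline{\mathrm{Orb}(x)}$ to $\overline{\mathrm{Orb}(\phi(x))}$, so $\phi(x)$ generates $Y$ and, being the image of a non-periodic point under an injection, is non-periodic; thus $\phi(x)$ is eventually periodic, lies in the non-periodic part $\mathrm{Orb}(y)$ of $Y$, and is therefore similar to $y$. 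Because similar sequences have exactly the same anomaly words, $a(\phi(x))=a(y)$, and it remains to prove $a(x)\equiv a(\phi(x))\bmod N$.

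For this I would put $x$ into a normal form relative to a chosen anomaly word $v$ of length $\alpha$. Writing $p=p_v(x)$, which has least period $N$, the defining formula for $p_v$ shows that there is an index $i$ with $x_k=p_k$ for all $k\le i$ and $x_k=p_{k-\alpha}$ for all $k>i+\alpha$: the left tail of $x$ is $p$ in phase $0$ and the right tail is $p$ shifted by $\alpha$. Now let $\phi$ have memory $a$ and anticipation $b$. For $k\le i-b$ the window $[k-a,k+b]$ lies entirely in the left tail, so $\phi(x)_k=\Phi(p_{[k-a,k+b]})=\phi(p)_k$; for $k>i+\alpha+a$ the window lies entirely in the right tail, so $\phi(x)_k=\Phi(p_{[k-a-\alpha,k+b-\alpha]})=\phi(p)_{k-\alpha}$. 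Hence $\phi(x)$ has the same shape as $x$: its left tail is $\phi(p)$ in phase $0$, its right tail is $\phi(p)$ shifted by $\alpha$, and only a finite transition word sits in between.

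Finally I would extract the congruence. Because $\phi$ preserves least periods, $\phi(p)$ has least period exactly $N$. Choosing any anomaly word $v'$ of $\phi(x)$ of length $\beta$ and removing it glues the two tails into a period-$N$ sequence which far to the left equals $\phi(p)$; comparing this with the right tail forces $\phi(p)_k=\phi(p)_{k+\beta-\alpha}$ for all large $k$, hence for all $k$, so $N\mid(\beta-\alpha)$ and $\beta\equiv\alpha\bmod N$. Applying Proposition~\ref{indepOfWord} to $\phi(x)$ gives $a(\phi(x))\equiv\beta\equiv\alpha\bmod N$, while applying it to $x$ gives $\alpha\equiv a(x)\bmod N$; combining these yields $a(x)\equiv a(\phi(x))=a(y)\bmod N$, as required. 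I expect the main obstacle to be the bookkeeping of the third paragraph: one must verify carefully that the sliding block code genuinely only ``sees'' the periodic pattern on each tail and that the phase shift it induces is exactly $\alpha$, rather than $\alpha$ corrupted by the window width, which is precisely why the explicit bounds $k\le i-b$ and $k>i+\alpha+a$ are needed.
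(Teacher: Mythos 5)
Your proof is correct, but the second half takes a genuinely different route from the paper's. Both arguments obtain $M=N$ identically, from the fact that a conjugacy carries the unique periodic orbit of $X$ onto that of $Y$ while preserving least periods, and both finish by invoking Proposition~\ref{indepOfWord}. For the congruence, however, the paper works on the $Y$ side: it writes $y=\sigma^k(\phi(x))$, locates the minimal anomaly word $v$ of $y$ together with the repeating $N$-blocks flanking it, and transports these blocks back to assert that $x$ has an anomaly word of length $a(y)$; the passage from equalities between blocks of $\phi(x)$ to the existence of an anomaly word of $x$ itself is left implicit there. You instead push forward: you compute directly how a block code with memory $a$ and anticipation $b$ acts on the two periodic tails of $x$, showing that $\phi(x)$ again consists of two copies of the periodic sequence $\phi(p)$ offset by exactly $\alpha$, and then compare this offset with the length $\beta$ of an anomaly word of $\phi(x)\in\mathrm{Orb}(y)$ to conclude $N\mid(\beta-\alpha)$. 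This buys an explicit verification (via the window bounds $k\le i-b$ and $k>i+\alpha+a$) that the code does not corrupt the phase shift, which is precisely the point the paper glosses over; the cost is more bookkeeping. The one step you should spell out is that the periodic sequence $p_{v'}(\phi(x))$ agrees with $\phi(p)$ not merely on a left tail but everywhere (two periodic sequences agreeing on a tail coincide, exactly as in Lemma~\ref{similarperiodic}), since that identification is what lets you compare it with the right tail of $\phi(x)$.
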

\begin{proof}
Assume there exists a conjugacy $\phi: X\rightarrow Y$.  Let $u$ (respectively $v$) be an anomaly word for $x$ (respectively $y$) such that $|u|=a(x)$ (respectively $|v|=a(y)$). Assume without loss of generality that $a(x)\leq a(y)$. Observe that $p_u(x)\in X$ and $p_v(y)\in Y$. Then because $\phi$ maps periodic points to periodic points and preserves their least periods \cite[Proposition~1.5.11]{lind}, we obtain that $\phi(p_u(x))$ is similar to $p_v(y)$ and  hence $M=N$. Next we will show that $x$ has an anomaly word of length $a(y)$. An application of
Proposition~\ref{indepOfWord} will then complete the proof of our lemma. Using again the fact that $\phi$ is a conjugacy and that neither $x$ nor $y$ are periodic, there exists an integer $k$ such that $\phi\left(\sigma^k(x)\right)=\sigma^k(\phi(x))=y$. Furthermore, since $v$ is an allowed word of length  $a(y)$ there is an integer $j$ such that  $v=y_{[j+1,j+a(y)]}$.  Therefore it follows that
\begin{align*}
y_{[j-N+1,j]}&=y_{[j+a(y)+1,j+a(y)+N]}
\end{align*}
as both are repeating words of $p_v(y)$  of size $N$ immediately preceding and following the anomaly word $v$ of $y$. On the other hand
\[
    y_{[j-N+1,j]}=\sigma^k(\phi(x)_{[j-N+1,j]})=\phi(x)_{[j-N+1+k,j+k]}
\]
and similarly
\[
    y_{[j+a(y)+1,j+a(y)+N]}=
    \phi(x)_{[j+a(y)+1+k, j+a(y)+N+k]}
\]
whence we obtain $x_{[j+k+1,j+a(y)+k]}$ is an anomaly word for $x$ of size $a(y)$, as was to be proven.
\end{proof}

\begin{proposition}\label{anomToConj}
Let $X$ and $Y$ be subshifts generated by eventually periodic sequences $x$ and $y$ respectively, each with least period $N$. If $a(x)\equiv a(y)\mod N$, there exists a conjugacy $\phi: X\rightarrow Y$.
\end{proposition}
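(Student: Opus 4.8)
The plan is to realize the conjugacy as a single sliding block code with a large window, after first normalizing so that $x$ and $y$ carry anomaly words of the same length. Write $P=p_u(x)$ and $Q=p_v(y)$ for the period-$N$ periodic sequences, and recall the decompositions $X=\mathrm{Orb}(x)\cup\mathrm{Orb}(P)$ and $Y=\mathrm{Orb}(y)\cup\mathrm{Orb}(Q)$. If $u$ is any anomaly word for $x$, then appending a full periodic block of length $N$ to $u$ produces, by the computation underlying Lemma~\ref{similarperiodic}, another anomaly word for $x$ of length $|u|+N$ with the same associated sequence $P$. Since $a(x)\equiv a(y)\pmod N$, repeatedly enlarging the shorter anomaly lets me choose anomaly words $u$ for $x$ and $v$ for $y$ with $|u|=|v|=:L$. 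After a shift I may assume the anomaly of $x$ occupies positions $1,\dots,L$, so that $x_k=P_k$ for $k\le 0$ and $x_k=P_{k-L}$ for $k\ge L+1$, and likewise for $y$, $Q$, $v$.

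Next I would define $\phi$ by a sliding block code with memory and anticipation $R\ge L+N$, using two clauses. If a window $z_{[i-R,i+R]}$ is a factor of $P$, then—because the least period of $P$ is exactly $N$ and $2R+1\ge N$—that factor determines a unique phase $\rho\in\mathbb Z/N\mathbb Z$, and I set $\phi(z)_i=Q_\rho$. Otherwise the window equals $x_{[j-R,j+R]}$ for a (to be shown unique) index $j$ whose window meets the anomaly of $x$, and I set $\phi(z)_i=y_j$. One then checks that the single phase convention reproduces both the left tail $Q_k$ and the phase-shifted right tail $Q_{k-L}$ of $y$, so that the two clauses glue to give $\phi(P)=Q$, $\phi(\sigma^m P)=\sigma^m Q$, and $\phi(x)=y$. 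As a sliding block code $\phi$ commutes with $\sigma$ and is continuous, and since $\phi(\mathrm{Orb}(x))=\mathrm{Orb}(y)$ is dense in the closed set $Y$, we get $\phi(X)\subseteq Y$.

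The main obstacle is proving that the second clause is well defined, i.e.\ that $\Phi$ is genuinely a function of the window. This is a recognizability statement: for $R\ge L+N$, every length-$(2R+1)$ window of $x$ meeting the anomaly fails to be a factor of $P$, and any two such windows equal as words occur at the same index of $x$. Both rest on the non-periodicity of $x$ and on the primitivity of the period word forced by $N$ being the least period: were a window spanning the disrupted region a factor of $P$, reading it as such would force the anomaly to continue the periodic phase across the junction, and since the two tails of $x$ differ in phase by $L\bmod N$ this would render $x$ periodic, a contradiction. The delicate bookkeeping arises for windows containing only a prefix or suffix of the anomaly that happens to agree with $P$; here I would locate the first index at which $x$ departs from the periodic continuation of its left tail and verify that every non-factor window contains this departure at a determined offset, which pins down $j$.

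Finally I would conclude bijectivity. The symmetric construction, interchanging the roles of $(x,P,u)$ and $(y,Q,v)$, yields a sliding block code $\psi\colon Y\to X$, and the recognizability statements give $\psi\circ\phi=\mathrm{id}_X$ and $\phi\circ\psi=\mathrm{id}_Y$; equivalently, one verifies directly that $\phi$ carries $\mathrm{Orb}(x)$ bijectively onto $\mathrm{Orb}(y)$ and $\mathrm{Orb}(P)$ bijectively onto $\mathrm{Orb}(Q)$, the periodic image being distinguished from the aperiodic one, so that $\phi$ is a bijection of $X$ onto $Y$. Since a bijective sliding block code between subshifts is a conjugacy \cite{lind}, this completes the proof.
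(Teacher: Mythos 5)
Your route is genuinely different from the paper's. The paper takes $u,v$ with $|u|=a(x)\le a(y)=|v|$ and defines a single-clause block map of radius $|v|$ by $\Phi\left(x_{[l-|v|,l+|v|]}\right)=y_{j-i+l}$, asserting directly that this induces a conjugacy; you instead equalize the anomaly lengths, enlarge the window to radius $R\ge L+N$ (so that it always exceeds one full period), and split the definition into a ``periodic-phase'' clause and a ``recognized position in $x$'' clause. The concern you isolate is a real one and is worth raising: a block map defined by ``where the window occurs in $x$'' is only a function if equal windows force equal outputs, and with a window shorter than $N$ a single word can occur in the periodic tail of $x$ at two different phases $l,l'$ with $y_{j-i+l}\ne y_{j-i+l'}$. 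So the two-clause, large-window construction is the right shape of argument, and your verification that the phase convention reproduces both tails of $y$ and that $\phi$ carries $\mathrm{Orb}(x)$ to $\mathrm{Orb}(y)$ and $\mathrm{Orb}(P)$ to $\mathrm{Orb}(Q)$ bijectively is sound.

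The gap is exactly where you say ``the main obstacle'' lies: the recognizability lemma --- that a radius-$R$ window which is not a factor of $P$ occurs at a unique position in $x$ --- is never proved, and the two justifications you sketch do not cover all cases. First, the phrase ``the two tails of $x$ differ in phase by $L\bmod N$'' is vacuous when $L\equiv 0\pmod N$, which does occur (e.g.\ repeating word $01$, anomaly $11$); there the two tails have the \emph{same} phase, the disturbance is a finite set of letters disagreeing with $P$, and the contradiction must come from concluding $x=P$, not from a phase mismatch. Second, the single marker ``the first index at which $x$ departs from the periodic continuation of its left tail'' is not contained in every non-factor window: a window hanging off the right edge of the anomaly, such as $x_{[L,L+2R]}$ when $x_L$ disagrees with the right tail's continuation, can fail to be a factor of $P$ while containing no occurrence of that left-departure index, so it cannot be located by that marker. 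A complete proof needs both the first departure from $P$ and the last departure from $\sigma^{-L}P$ as markers, an argument that the ``bad'' length-$(N+1)$ subwindows are confined to an interval of length at most $L+N$ that any two occurrences must see at the same offsets, and a check that $R$ is large enough for the resulting offset constraints to force the two occurrences to coincide (for this last step $R\ge L+N$ may need to be enlarged to roughly $R\ge L+2N$; since you only assumed a lower bound on $R$, this is harmless, but it must be verified). Until that lemma is written down, the map $\Phi$ in your second clause is not known to be a function of the word, and the proof is incomplete.
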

\begin{proof}
   Let $u$ and $v$ be anomaly words of $x$ and $y$, respectively, such that $a(x)=|u|$ and $a(y)=|v|$. Without loss of generality, assume $|u|\leq |v|$. Then, by hypothesis, there is an integer $k$ such that $|v|=|u|+kN$. Since $u$ and $v$  are allowed words, there are integers $i$ and $j$  such that $u=x_{[i,i+a(x)-1]}$ and $v=y_{[j,j+a(y)-1]}$. Let us define a sliding block code $\phi\colon X\to Y$ with memory and anticipation $|v|$ induced by the $2|v|+1$-block map $\Phi$ given by
\[
  \Phi\left(x_{[l-|v|,l+|v|]}\right)=y_{j-i+l},\,\,\, l\in\mathbb Z.
\]
It follows that $\phi(x)$ is similar to $y$ and therefore it gives the desired conjugacy.
\end{proof}

\begin{theorem}\label{anomIffConj}
For subshifts $X$ and $Y$ generated by eventually periodic sequences $x$ and $y$ respectively, $X$ and $Y$ are conjugate if and only if $x$ and $y$ have the same least period $N$ and the same anomaly size mod $N$.
\end{theorem}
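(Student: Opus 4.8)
The plan is to recognize that this biconditional is precisely the conjunction of the two results immediately preceding it, so the proof should require no new machinery---only careful bookkeeping of which hypotheses match which conclusion. Concretely, I would split the equivalence into its two implications and dispatch each by citation.

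For the forward (``only if'') direction, I would assume that $X$ and $Y$ are conjugate. Writing $N$ for the least period of $x$ and $M$ for the least period of $y$, Lemma~\ref{conjToAnom} yields immediately that $M = N$ and $a(x)\equiv a(y)\bmod N$. The only point requiring attention is to reconcile the phrasing: the theorem asserts that $x$ and $y$ share a common least period $N$, which is exactly the statement $M=N$ produced by the lemma, after which the congruence $a(x)\equiv a(y)\bmod N$ is literally the second conclusion.

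For the reverse (``if'') direction, I would assume that $x$ and $y$ have the same least period $N$ and that $a(x)\equiv a(y)\bmod N$. These are precisely the hypotheses of Proposition~\ref{anomToConj}, which then supplies a conjugacy $\phi\colon X\to Y$, completing the argument.

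I do not expect any genuine obstacle at the level of this theorem: the substantive work has already been carried out in Lemma~\ref{conjToAnom}, where the invariance of least periods of periodic points under conjugacy (via \cite[Proposition~1.5.11]{lind}) and the transport of an anomaly word across $\phi$ are the delicate steps, and in Proposition~\ref{anomToConj}, where the explicit sliding block code of memory and anticipation $|v|$ is constructed. The only mild care needed here is to ensure that the quantifiers and the single symbol $N$ are used consistently across both cited statements, so that ``same least period'' and the congruence class mod $N$ refer to the same $N$ throughout.
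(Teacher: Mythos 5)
Your proposal is correct and matches the paper's proof exactly: the paper also derives the theorem as an immediate conjunction of Lemma~\ref{conjToAnom} (for the ``only if'' direction) and Proposition~\ref{anomToConj} (for the ``if'' direction). Your version merely spells out the bookkeeping that the paper leaves implicit.
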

\begin{proof}
This follows from Lemma~\ref{conjToAnom} and Proposition~\ref{anomToConj}.
\end{proof}

We may also classify eventually periodic sequences up to flow equivalence.

\begin{theorem}\label{floweq}
All subshifts generated by eventually periodic sequences are flow equivalent.
\end{theorem}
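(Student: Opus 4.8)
The plan is to show that every eventually periodic subshift is flow equivalent to a single fixed model, and then invoke transitivity. The natural model is the subshift generated by a simple eventually periodic sequence with least period $1$ and anomaly size $1$ over a two-letter alphabet, say the sequence $\cdots 000 1 000 \cdots$, whose generated subshift consists of one fixed point and one biinfinite orbit asymptotic to it. If I can flow-equivalence every eventually periodic subshift to this one model, the theorem follows since flow equivalence is an equivalence relation.

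The key tool will be \emph{symbol expansion} (equivalently, elementary flow equivalences), which is known to generate flow equivalence together with conjugacy. Recall that flow equivalence is the equivalence relation generated by conjugacy together with the operation of replacing a single symbol $a\in\mathcal A$ everywhere it appears by a two-symbol word $a'a''$ over a larger alphabet (symbol expansion), and its inverse (symbol contraction). The geometric content is that the suspension flow is unchanged up to orientation-preserving homeomorphism when one reparametrizes the height of a symbol, and in particular one may subdivide or merge consecutive symbols along flow lines. My strategy is therefore: first, use Theorem~\ref{anomIffConj} to reduce to a canonical representative in each conjugacy class, namely an eventually periodic sequence of least period $N$ and anomaly size $a$ with $0 \le a < N$ (or a convenient normal form thereof); second, apply a sequence of symbol contractions to collapse the length of the period and the anomaly down to the minimal model.

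Concretely, I would proceed in two stages. \textbf{Reducing the period to $1$:} given an eventually periodic sequence with repeating word $w$ of length $N$, I apply symbol contraction to merge the $N$ letters of each period block into a single new symbol, so that the purely periodic part becomes a fixed point. Care is needed at the anomaly, but since the anomaly word $v$ occupies finitely many positions, only finitely many contractions are affected, and the resulting sequence is again eventually periodic with least period $1$ and some anomaly size $a'$. \textbf{Reducing the anomaly to size $1$:} once the background is a fixed point $\cdots aaa\cdots$, any anomaly block of length $a' > 1$ can be contracted symbol-by-symbol — replacing two consecutive anomaly letters by one new symbol — each step a legitimate symbol contraction, until the anomaly consists of a single letter distinct from $a$. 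At that point the subshift is conjugate to the fixed model, and we are done.

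\textbf{The main obstacle} I anticipate is making the symbol-contraction steps rigorous at the boundary between the periodic background and the anomaly word, since contraction is a global operation replacing a fixed two-symbol word \emph{everywhere} it occurs, and one must ensure the chosen two-symbol words do not collide with patterns appearing in the periodic part or straddle the anomaly in an uncontrolled way. The clean way to handle this is to first pass (via symbol expansion, using new alphabet symbols) to a \emph{marked} version of the sequence in which the positions inside the anomaly and the phase within each period block are recorded in the alphabet, so that each desired contraction merges a uniquely determined pair of symbols with no ambient collisions; the underlying suspension flow is unchanged throughout. Once this bookkeeping is in place the individual steps are routine, and transitivity of flow equivalence assembles them into the claimed result.
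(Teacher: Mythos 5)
Your argument is correct in outline but runs in the opposite direction from the paper's. The paper only ever \emph{expands}: starting from a sequence of least period $N$ and anomaly size $a(x)$, it first uses a conjugacy (via Theorem~\ref{anomIffConj}) to plant a fresh symbol either in the repeating word or in the anomaly word, then applies a single symbol expansion to that fresh symbol, producing flow-equivalent subshifts with parameters $(N+1,a(x))$ or $(N,a(x)+1)$; iterating, it raises any two eventually periodic subshifts to a common least period and common anomaly size and finishes by invoking Theorem~\ref{anomIffConj} to get an actual conjugacy. You instead \emph{contract} everything down to the single model $\cdots 000\,1\,000\cdots$ of period $1$ and anomaly size $1$ and conclude by transitivity. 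Each route has a trade-off: expansion is always a legitimate move, so the paper never has to worry about whether a contraction is admissible, but it must use the full strength of the conjugacy classification at the end; your route needs only the trivial conjugacy between two copies of the minimal model at the end, but every contraction step must be certified as the inverse of a symbol expansion, i.e.\ the symbol being absorbed must occur in the subshift \emph{only} immediately after its designated predecessor, and that predecessor must \emph{always} be followed by it. You correctly identify this as the crux and your fix is the right one: a preliminary recoding (a conjugacy given by a sliding block code that records the phase within the repeating word, which is locally determined because the $N$ cyclic rotations of a word of least period $N$ are distinct, with finitely many special symbols absorbed into an enlarged anomaly near the boundary) makes all the relevant symbols occur in unique contexts, after which the $N-1$ contractions of the period block and the subsequent contractions of the anomaly block are each elementary. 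Note also that it is exactly this recoding-before-(de)composition device that the paper itself uses, only in the expansion direction; so the two proofs are genuinely dual rather than merely rearranged.
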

\begin{proof}
Let $x$ in ${\mathcal A}\sp\mathbb Z$ be an eventually periodic sequence and assume it has least period $N$.
We first show that there are eventually periodic sequences $y_1$ and $z_1$,  both generating subshifts flow equivalent to the subshift generated by $x$, such that $y_1$ has least period $N+1$ and anomaly size $a(x)$ whilst $z_1$ has least period $N$ and anomaly size $a(x)+1$. For this purpose,
suppose that $u$ is an anomaly of $x$ such that $a(x)=|u|$ and let $w$ be the word of length $N$ immediately following $u$ in $x$.
Further assume that $a$ and $b$ are   the last letters for the words  $u$ and $w$, respectively.  Let $a\sp\prime$ and $b\sp\prime$ be symbols not in $\mathcal A$.
Consider now the words $u_1$ and $w_1$ obtained from $u$ and $w$ by replacing their last symbols $a$ and $b$ by $a\sp\prime$ and $b\sp\prime$, respectively. Consider now the eventually periodic sequence $y_a$ with anomaly word $u$ and least period $N$ such that the repeating word for $p_u(x)$ immediately following $u$ in $x_a$ is $w_1$. Similarly, we consider the eventually periodic sequence $z_b$ with anomaly word $u_1$ and repeating word following $u_1$ in $x_b$ to be $w$. If follows from Theorem~\ref{anomIffConj} that the subshift generated by $x$ is conjugate to the subshifts generated by  $x_a$ and $x_b$, respectively. Now let $\alpha$ and $\beta$ be symbols not belonging to the sets $\mathcal A\cup\{a\}$ and $\mathcal A\cup \{b\}$, respectively. By means of  symbol expansion, we consider new sequences $y_1$ and $z_1$ obtained from $y_a$ and $z_b$ by substituting each symbol $a$ and $b$ by words $a\alpha$ and $b\beta$, respectively. Since flow equivalence is generated by conjugacy and symbol expansion \cite{parry} we then get that the subshifts generated by $y_1$ and $z_1$ are flow equivalent 
to the subshift generated by $x$, as was to be proved. For a more detailed account on symbol expansions, the reader is referred to \cite[Section~1.3]{danishmaster} or \cite[Chapter~2]{danishmaster}.

Using a straightforward induction argument we conclude that, given any positive integers $i$ and $j$, there exist eventually periodic sequences $y_i$ and $z_j$ such that $y_i$ has least period $N+i$ and anomaly size $a(x)$ and $z_j$ has least period $N$ and anomaly size $a(x)+1$.

For the proof of the theorem, consider arbitrary eventually periodic subshifts $X$ and $X\sp\prime$ and suppose they have as generators the eventually periodic sequences $x$ and $x\sp\prime$, respectively. Further assume $X$ and $X\sp\prime$  have least periods $M$ and $M\sp\prime$, respectively. Let $N=\max\{M,M\sp\prime\}$. By the work above, there are sequences $y$ and $y\sp\prime$ with induced subshifts flow equivalent to $X$ and $X\sp\prime$, respectively, such that $y$ and $y\sp\prime$ have the same least period $N$ and anomaly sizes $a(x)$ and $a(x\sp\prime)$, respectively. Let $A=\max\{a(x),a(x\sp\prime)\}$. Again, by the work above, there are eventually periodic sequences $z$ and $z\sp\prime$ with induced subshifts flow equivalent to those induced by $y$ and $y\sp\prime$, respectively, such that both have least period $N$ and anomaly size $A$. An application of Theorem~\ref{anomIffConj} yields the subshifts generated by $z$ and $z\sp\prime$ to be conjugate and therefore flow equivalent. Hence $X$ and $X\sp\prime$ are flow equivalent, as wanted.
\end{proof}

\section{Skew Sturmian Sequences}

In this section we apply our results to classify skew Sturmian subshifts. We will begin by reviewing the definition of skew Sturmian sequences. We then  show that they are eventually periodic and calculate their anomaly sizes in order to apply our results.

For the remainder of this paper, let $\mathcal A$ denote the two symbol alphabet $\mathcal A=\{0,1\}$.  Sturmian sequences were introduced by Hedlund and Morse in \cite{hedlund}, where we refer the reader for the details of the theory presented next.  
Given a sequence $x\in\mathcal{A}\sp\mathbb{Z}$ we associate a cell-series which may be regarded as an abbreviation of $x$ as a bi-infinite sequence of allowed words $\cdots  B_{-1}  B_0  B_1  \cdots $ where each $B_i$, a cell, denotes an allowed word of $x$ which consists only of 0's except its first symbol which is a  symbol 1. We shall say that such a cell-series is a cell-series representation of $x$. Furthermore, we define an $n$-chain as a portion of a cell-series, namely, an allowed word of $x$ written in the form $B_{r}  \cdots B_s  $, with $r\leq s$ and $n=s-r+1$. We say that $x\in{\mathcal A}\sp\mathbb{Z}$  is Sturmian if, given arbitrary $m>0$, the difference between the numbers of $0$'s contained in any two allowed $m$-chains is at most one.

Given a cell-series representation of a Sturmian sequence $x$, it is understood that (a sequence similar to) the original $x$ might be recovered simply by replacing each word $B_i$ by the string of symbols it denotes. For this reason, we refer to a Sturmian sequence or one of its cell-series representations interchangeably. If $b_n$ denotes the total quantity of zeroes in any given $n$-chain, the limit $b_n/n$ as $n$ goes to infinity converges to a real number $\alpha$, which we define to be the frequency of $x$. We remark that the definition of our cell-series is a slight modification of the notion of cell-series defined in \cite{hedlund}, where a cell is composed entirely of $0$ symbols. We find our notation avoids the writing of the $1$ symbols between cells and does not at all affect the results.

It turns out that Sturmian sequences with irrational frequency are not periodic and that periodic Sturmian sequences have rational frequency. A skew Sturmian sequence is a non-periodic Sturmian sequence with rational frequency.

We remark that in the literature there are a number of equivalent ways to state the definition of a Sturmian sequence, e.g., see \cite{berthe} and references within. Since most authors focus on the irrational frequency case, this explains why we have decided to follow here the original definition of Morse and Hedlund.

Among the many beautiful results in \cite{hedlund} we highlight Theorems 5.1, 5.2 and 5.3. They establish algorithms to construct all possible Sturmian sequences with nonnegative real frequency $\alpha$. In particular, \cite[Theorem~5.2]{hedlund} asserts that any given Sturmian cell-series representation is produced by one of two possible algorithms. We will reproduce these next, partially because of the relevance they have to the proofs of our results, but also because a typo found in the original paper has obliged us to set the record straight.

%
%
Throughout the rest of the paper, let $\alpha=\dfrac{q}{p}$ be a positive rational number for relatively prime positive integers $p$ and $q$.
Let $\beta = \dfrac{1}{\alpha} = \dfrac{p}{q}$. Fix $m\in\mathbb{Z}$ and define the set $G = \{m+k\beta: k\in\mathbb{Z}\}$. Define then the cell-series representation $S(m,\alpha)$, where the number of zeroes in the cell $B_n$ is the number of points $x\in G$ satisfying
\begin{equation}\label{S}
n <  x \leq n+1, \,\,\,\, m<x<m+1,\,\,\,\, n\leq  x < n+1,
\end{equation}
according as $n<m$, $n=m$, or $n>m$. The cell-series representation $S\sp\prime(m,\alpha)$ is similarly defined by requiring $B_n$ to be a cell containing as many zeroes as there are points $x\in G$ satisfying
\begin{equation}\label{Sprime}
n \leq  x < n+1, \,\,\,\, m\leq x\leq m+1,\,\,\,\, n<  x \leq n+1,
\end{equation}
according as $n<m$, $n=m$, or $n>m$.

We remark that in \cite{hedlund}, the first inequality in (\ref{Sprime}) above is incorrectly stated as $n<x\leq n+1$. In fact, as stated in that paper, the resulting cell-series is not necessarily Sturmian. After carefully reviewing \cite[Theorem~4.8]{hedlund} where it is proved that both $S(m,\alpha)$ and $S\sp\prime(m,\alpha)$ are in fact skew Sturmian, we learn that the inequalities (\ref{Sprime}) were really intended.

The following lemma will be needed to compute the anomaly size of a skew Sturmian sequence. We remark that it could also be derived by means of continued fractions \cite[Theorem~2]{khinchin}.


\begin{lemma}\emph{(Restricted B\'{e}zout's Identity)}\label{bezout}
For relatively prime positive integers $p$ and $q$, there exist unique integers $0 \leq a < q$ and $0 < b \leq p$, such that  $bq - ap = 1.$ Furthermore, $a+b$ and $p+q$ are also relatively prime.
\end{lemma}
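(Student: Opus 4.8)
The plan is to prove the three assertions of Lemma~\ref{bezout} in order: existence, uniqueness, and the coprimality of $a+b$ and $p+q$. For existence, I would start from the standard (unrestricted) B\'ezout identity: since $\gcd(p,q)=1$, there exist integers $a_0, b_0$ with $b_0 q - a_0 p = 1$. The task is then to normalize these into the prescribed ranges $0\leq a < q$ and $0 < b \leq p$. The key observation is that the general solution to $bq - ap = 1$ is $b = b_0 + tp$, $a = a_0 + tq$ for $t\in\mathbb{Z}$, since adding $tp$ to $b$ and $tq$ to $a$ changes $bq - ap$ by $tpq - tqp = 0$. First I would choose the unique $t$ making $a = a_0 + tq$ land in $[0,q)$, i.e.\ reduce $a_0$ modulo $q$. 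I then need to check that the corresponding $b$ automatically lands in $(0,p]$; this should follow by bounding $b = (1+ap)/q$ using $0\leq a\leq q-1$, which gives $1/q \leq b \leq (1 + (q-1)p)/q = p - (p-1)/q \leq p$, and the lower bound forces $b>0$ since $b$ is a positive-valued fraction equal to an integer.

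For uniqueness, I would argue that any two solutions $(a,b)$ and $(a',b')$ in the restricted ranges differ by $(a'-a, b'-b) = (tq, tp)$ for some integer $t$ (again from the structure of the solution set), and then observe that $a, a' \in [0,q)$ forces $|a'-a| < q$, so $|tq| < q$ and hence $t=0$. This immediately gives $a=a'$ and $b=b'$.

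For the final coprimality claim, the cleanest approach is to exhibit an explicit B\'ezout combination of $a+b$ and $p+q$ equal to $1$, using the relation $bq - ap = 1$. I would look for integers expressing $1$ as a combination of $a+b$ and $p+q$; a natural candidate is to compute $b(p+q) - p(a+b) = bp + bq - pa - pb = bq - ap = 1$, which shows $b(p+q) - p(a+b) = 1$ and hence $\gcd(a+b, p+q) = 1$ directly. I expect this step to be routine once the identity $bq-ap=1$ is in hand, since it is just a matter of finding the right linear combination.

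The main obstacle will be the existence step, specifically verifying that the single reduction step which forces $a$ into $[0,q)$ simultaneously forces $b$ into $(0,p]$, rather than needing a separate adjustment that might break the first constraint. The pigeonhole-style bounding of $b$ in terms of $a$ is where the care is needed, and I would want to confirm the edge cases (e.g.\ whether $b=p$ or $a=0$ can actually occur, such as when $q=1$) are handled consistently with the half-open versus closed interval choices $0\leq a < q$ and $0 < b \leq p$.
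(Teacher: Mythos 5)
Your proposal is correct and follows essentially the same route as the paper: both start from the unrestricted B\'ezout identity, use the parametrization $b = b_0 + tp$, $a = a_0 + tq$ of the solution set to normalize into the stated ranges, and prove the final coprimality claim via the identical linear combination $b(p+q) - p(a+b) = bq - ap = 1$. The only (minor) difference is in the existence step, where the paper normalizes $a$ and $b$ separately with two shift parameters and shows by contradiction that they coincide, whereas you normalize $a$ alone and then bound $b = (1+ap)/q$ directly to land in $(0,p]$ --- a slightly cleaner bookkeeping of the same idea.
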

\begin{proof}
Let $p$ and $q$ be relatively prime positive integers. By B\'ezout's identity, the equation $qx + (-p)y = 1$ has a solution $(x_1, y_1)$ and all other solutions have the form
$$x = x_1 + \ell p\textnormal{ and }y = y_1 - \ell(-q) = y_1 + \ell q$$
for any $\ell\in\mathbb{Z}$ (for a proof, see \cite[Theorem 5.1]{niven}). Since all possible values of $x$ are some multiple of $p$ apart, there must be exactly one $\ell_1$ such that $0<x_1+\ell_1 p\leq p$. By the same logic, there must be exactly one $\ell_2$ such that $0\leq y_1+\ell_2 q < q$. We define $b = x_1+\ell_1 p$ and $a = y_1+\ell_2 q$. We will show these values $a$ and $b$ satisfy the equation $bq-ap = 1$. It suffices to show that $\ell_1 = \ell_2$. For the sake of contradiction, we assume that $\ell_1>\ell_2$. (The case $\ell_1<\ell_2$ is similar.) Let $a' = y_1+\ell_1 q$, and observe that $a'\geq q$. Then $1 = bq - a'p \leq bq - pq = q(b-p)<0$, which yields the desired contradiction.

Furthermore, we see that
$$(p+q)b + (a+b)(-p) = bp+bq - ap - bp = bq - ap = 1,$$
implying that $p+q$ and $a+b$ are relatively prime by \cite[Theorem 1.3]{niven}. 
\end{proof}

In accordance with the conventions introduced at the beginning of the paper, by a skew Sturmian subshift with frequency $\alpha$ we shall mean a subshift generated by a skew Sturmian sequence with frequency $\alpha$. We will further refer to a skew Sturmian sequence as being of type $S$ or $S\sp\prime$ if it has a cell-series representation $S(m,\alpha)$ or $S\sp\prime(m,\alpha)$ for $m\in\mathbb Z$, respectively.

\begin{remark}\label{stypes}
 \normalfont It is straightforward to see that for any two integers $m$ and $n$ the cell-series $S(m,\alpha)$ and $S(n,\alpha)$ (respectively $S\sp\prime(m,\alpha)$ and $S\sp\prime\left(n,\alpha\right)$) are similar. Hence when referring to a skew Sturmian sequence, up to similarity, it will suffice to specify its frequency and type only.
\end{remark}

\begin{proposition}\label{reciprocals}
Let $x$ and $y$ be skew Sturmian sequences with positive frequencies $\frac{p}{q}$ and $\frac{q}{p}$. If the cell-series representations of $x$ and $y$ are of different type then the subshifts associated to $x$ and $y$ are conjugate.
\end{proposition}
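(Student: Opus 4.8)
The plan is to reduce the statement to the conjugacy criterion of Theorem~\ref{anomIffConj}, so that it suffices to show that $x$ and $y$ have the same least period and the same anomaly size modulo that period. The least period is quickly dealt with: a frequency-$q/p$ sequence carries on average $q/p$ zeros per cell, so one period consists of $p$ ones and $q$ zeros, a repeating word of length $p+q$ that is primitive because $\gcd(p,q)=1$; hence the least period equals $p+q$. Since the reciprocal frequencies $p/q$ and $q/p$ share the sum $p+q$ of numerator and denominator, both $x$ and $y$ have least period $N=p+q$.

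The core of the argument, and the place where Lemma~\ref{bezout} enters, is the computation of the two anomaly sizes. Working from the definitions of $S(m,\alpha)$ and $S\sp\prime(m,\alpha)$ with $\beta=p/q$, I would observe that $G=\{m+k\beta\}$ meets $\mathbb Z$ exactly in $m+p\mathbb Z$ and is invariant under translation by $p$; thus, away from the central index $m$, the cell counts are $p$-periodic and recover the length-$(p+q)$ repeating word on each side. The half-open conventions used for $n\neq m$ and the special inequality used at $n=m$ differ only in how the recurring integer points of $G$ are assigned to cells, so the left and right periodic tails are two cyclic rotations of one and the same fundamental word, and by Proposition~\ref{indepOfWord} the anomaly size is the unique representative in $(0,N)$ of the rotation offset between them. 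Following the step-$p/q$ progression across the defect and evaluating this offset by means of the integers $a,b$ of Lemma~\ref{bezout}, I expect to find that a type-$S$ sequence of frequency $q/p$ has anomaly size $a+b$, while a type-$S\sp\prime$ sequence of the same frequency has anomaly size $(p+q)-(a+b)$.

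With these formulas in hand, the proposition follows from the reciprocity built into Lemma~\ref{bezout}. Let $(a,b)$ be the pair attached to $q/p$, so $bq-ap=1$ with $0\le a<q$ and $0<b\le p$. Rewriting this as $(q-a)p-(p-b)q=1$ and noting that $0\le p-b<p$ and $0<q-a\le q$, the uniqueness clause of Lemma~\ref{bezout} forces the pair attached to $p/q$ to be $(\tilde a,\tilde b)=(p-b,q-a)$, whence $\tilde a+\tilde b=(p+q)-(a+b)$. Therefore the type-$S$ frequency-$p/q$ anomaly $\tilde a+\tilde b$ coincides with the type-$S\sp\prime$ frequency-$q/p$ anomaly $(p+q)-(a+b)$, and symmetrically the type-$S\sp\prime$ frequency-$p/q$ anomaly $(p+q)-(\tilde a+\tilde b)=a+b$ coincides with the type-$S$ frequency-$q/p$ anomaly $a+b$. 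In either assignment of opposite types to the reciprocal frequencies, $x$ and $y$ have equal least period $p+q$ and equal anomaly size, so Theorem~\ref{anomIffConj} delivers the conjugacy.

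The main obstacle is precisely the anomaly-size computation of the second paragraph. The least-period claim and the closing Bézout bookkeeping are routine, but isolating the net defect in the cell-series and proving that it contributes exactly $a+b$ (respectively $(p+q)-(a+b)$) symbols demands careful control of how the arithmetic progression $G$ of step $p/q$ interacts with the two boundary conventions; once that offset is pinned down, everything else is a short deduction.
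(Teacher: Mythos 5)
Your route is genuinely different from the paper's: you reduce the statement to the invariants of Theorem~\ref{anomIffConj} (equal least period and equal anomaly size mod $N$), computing anomaly sizes via the restricted B\'ezout coefficients, whereas the paper constructs an explicit conjugacy directly. It realizes $S(0,q/p)$ and $S'(0,p/q)$ as cutting sequences of the reciprocal lines $y=\tfrac{p}{q}x$ and $y=\tfrac{q}{p}x$, observes that horizontal and vertical crossings are interchanged and that the $01$/$10$ conventions at integer lattice points are likewise swapped, and concludes that the $1$-block map exchanging $0$ and $1$ is a bijective sliding block code. That construction buys something your argument cannot: a $1$-block conjugacy shows the two sequences have \emph{exactly} the same anomaly size, not merely the same size mod $p+q$, and the paper explicitly needs this stronger fact later in the proof of Proposition~\ref{anomalysize}.

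The genuine gap in your proposal is the anomaly-size computation that you yourself flag as ``the main obstacle.'' You cannot close it by citing Proposition~\ref{anomalysize}: in the paper that proposition comes \emph{after} Proposition~\ref{reciprocals} and its type-$S'$ case is proved precisely by invoking Proposition~\ref{reciprocals} to reduce to type $S$, so appealing to it here would be circular. To make your argument self-contained you would have to carry out, from the inequalities defining $S'(m,\alpha)$, a direct computation of the anomaly size of a type-$S'$ sequence analogous to the bijection $f(z)=z+p+a\tfrac{p}{q}$ that the paper uses for type $S$ — a nontrivial calculation that your second paragraph only predicts the outcome of (``I expect to find\ldots''). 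The least-period claim is also asserted rather than proved (the paper sources the cell-period $p$ to Hedlund's Theorems 3.3 and 4.8), though that is a smaller matter. The closing B\'ezout reciprocity, identifying the coefficients for $(p,q)$ as $(p-b,q-a)$, is correct and matches the paper's own bookkeeping; the missing piece is everything feeding into it.
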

\begin{proof}
First we recall a geometric representation of non-skew Sturmian sequences by means of cutting sequences as defined in  \cite[Page 48]{lothaire}. Since it will be useful for our purposes, we will describe next how this representation can be further modified to represent skew Sturmian sequences. Consider a skew Sturmian sequence with positive real frequency $\alpha = \frac{p}{q} = \beta^{-1}$. Let $m\in\mathbb{Z}$.
Recall that we construct these sequences by superimposing an integer lattice over $\mathbb{R}^2$ and adding a $1$ or a $0$ every time the line $y = \beta x + m$ crosses a horizontal lattice line or a vertical lattice line, respectively. This may be generalized further to skew Sturmian sequences by modifying the symbols inserted at points where both horizontal and vertical lines are crossed simultaneously, which we call integer lattice points. To represent $S(m, \alpha)$, when the line crosses an integer lattice point below (or at) the $y$-value $m$, we insert $01$. When the line crosses an integer lattice point above the $y$-value $m$, we insert $10$. Similarly, to represent $S'(m, \alpha)$, we insert $10$ when the line crosses integer lattice points below (or at) the $y$-value $m$ and $01$ when the line crosses integer lattice points above the $y$-value $m$. The inequalities \ref{S} and \ref{Sprime} indicate that we then have a correspondence between the number of $0$'s in the cell $B_n$ and the $0$'s added to the sequence between the $1$'s added at $y = n$ and $y = n+1$. 

We will now show that the subshifts $X$ and $Y$ generated by the sequences $S(0,\frac{q}{p})$ and $S^\prime (0,\frac{p}{q})$, respectively, are conjugate by a simple symbol reversal.

Consider the lines associated to the two sequences, $y = \frac{p}{q}x$ and $y = \frac{q}{p}x$ respectively. Because they have reciprocal slopes, we see that every vertical lattice line crossed by the first line corresponds to a horizontal lattice line crossed by the second, and vice versa. In addition, notice that the rules for dealing with intersections with integer coordinate points are reversed. At any intersection above the origin, the first line adds $10$ while the second adds $01$; at any intersection below (or at) the origin, the first line adds $01$ while the second adds $10$.

Then we may map one sequence to the other using the $1$-block map $\Phi$ which sends $0$ to $1$ and $1$ to $0$. This yields a bijective sliding block code from $X$ to $Y$.
\end{proof}

Using this result, we see that up to conjugacy, every skew Sturmian subshifts with positive frequency may be generated by a sequence of type $S$. We may extend this result to $S'(m,0)$, the sequence of the form $\cdots 1 1 1 0 1 1 1 \cdots$, by defining $S(m,\infty)$ as the sequence $\cdots 0 0 0 1 0 0 0 \cdots $. (We note that $S(m, 0)$ and $S'(m, \infty)$ are not defined.) Therefore all skew Sturmian subshifts may be generated using a sequence of type $S$.

We will now prove a previously announced result.
 Notice that the fact that skew Sturmian sequences are eventually periodic is known see e.g. \cite[Lemma~6.2.2, p.~156]{fogg}. However, more is needed: namely, to calculate the anomaly size in order to use our classification results from the previous section.

\begin{proposition}\label{anomalysize}
Let $x$ be a skew Sturmian sequence of frequency $\alpha = \frac{q}{p}$ where $p$ and $q$ are relatively prime positive integers and suppose that $(a,b)$ are the restricted B\'ezout coefficients for $(q,p)$. If $x$ is of type $S$ or $S\sp\prime$, then it is an eventually periodic sequence with least period $p+q$ and anomaly size  $a+b$ or $p+q-\left(a+b\right)$, respectively.
\end{proposition}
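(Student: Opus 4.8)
The plan is to work geometrically, using the cutting-sequence model introduced in the proof of Proposition~\ref{reciprocals}, and to reduce everything to a single clean line. By Remark~\ref{stypes} all $S(m,\alpha)$ are similar, so I would fix $m=0$ and consider the line $y=\frac{p}{q}x$, whose integer lattice points are exactly $(kq,kp)$ for $k\in\mathbb Z$. First I would record the two \emph{boundary} periodic sequences obtained by resolving every lattice crossing in the same way: let $P$ be the sequence produced when each lattice point contributes $01$ (equivalently, the cutting sequence of $y=\frac{p}{q}x+c$ for $c=0^-$) and $P'$ the sequence when each contributes $10$ (the case $c=0^+$). Between consecutive lattice points the line crosses exactly $p$ horizontal and $q$ vertical lines, so both $P$ and $P'$ are periodic of period $p+q$; since the repeating word has $q$ zeros and $p$ ones with $\gcd(p,q)=1$, it is primitive, so $p+q$ is the \emph{least} period. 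The point of the construction is that $S(0,\alpha)$ uses the $01$-convention at lattice points with $k\le 0$ and the $10$-convention for $k\ge 1$, so it agrees with $P$ on all indices lying to the left of the origin lattice point and with $P'$ on all indices to the right. Hence, after deleting a suitable finite block, $S(0,\alpha)$ becomes the periodic sequence $P$, which gives eventual periodicity with least period $p+q$; the anomaly size is then governed entirely by the relative shift between $P$ and $P'$.

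Next I would quantify that shift. Writing the left and right repeating words as the two cyclic rotations $W$ and $W'$ of a single primitive word (the mechanism already exploited in Proposition~\ref{indepOfWord}), the minimal length of a deleted block that resynchronizes the right half with the left half is $(-s)\bmod(p+q)$, where $s$ is defined by $P'=\sigma^{s}P$. To compute $s$ I would use a lattice-translation argument. For $c$ strictly between $0$ and $1/q$ there are no lattice points on $y=\frac{p}{q}x+c$, since such a point would force the integer $qy-px=qc\in(0,1)$; therefore the phase is constant and equal to $P'$ on this whole interval. At $c=1/q$ the line first meets new lattice points, namely those $(x_0,y_0)$ with $qy_0-px_0=1$, and the line at $c=(1/q)^-$ is precisely the translate of the line at $c=0^-$ (which produces $P$) by a lattice vector $(a,b)$ satisfying $qb-pa=1$. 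This is exactly the restricted B\'ezout pair of Lemma~\ref{bezout}. Since translating the reference line by a lattice vector $(a,b)$ leaves the integer grid invariant and merely relabels the crossings, it shifts the cutting sequence by $a+b$ positions, yielding $P'=\sigma^{-(a+b)}P$ and hence anomaly size $\equiv a+b \pmod{p+q}$. Because $0<a+b\le (q-1)+p<p+q$, this congruence class has a unique representative in $\{1,\dots,p+q-1\}$, so $a(x)=a+b$ for type $S$.

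The step I expect to be the main obstacle is the precise determination of this shift, including its sign: I must match the half-open counting conventions of the inequalities~(\ref{S}) and the tie-breaking at lattice points to the geometric translation by $(a,b)$, and verify that the index really moves by $+(a+b)$ rather than by some off-by-a-few correction. I would pin down the sign on a small instance (for example $\alpha=\tfrac12$, where $(a,b)=(0,1)$ and the anomaly size is $1$) and then argue the general case uniformly. Finally, for a sequence of type $S'$ the tie conventions at $y=m$ are reversed, which interchanges the roles of $P$ and $P'$ and reverses the translation, so the same computation gives anomaly size $\equiv -(a+b)\equiv p+q-(a+b)\pmod{p+q}$; as $0<p+q-(a+b)<p+q$, this is exactly $p+q-(a+b)$, completing the statement.
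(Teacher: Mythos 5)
Your argument is correct, but it takes a genuinely different route from the paper. The paper stays inside Hedlund's cell-series formalism: it quotes \cite[Theorems 3.3 and 4.8]{hedlund} to get that the beams on either side of $B_0$ have cell-period $p$ and that $B_0\cdots B_{p-1}$ is the unique $p$-chain with $q-1$ zeroes, deduces from the Sturmian condition that the $b$-chain $B_0\cdots B_{b-1}$ has exactly $a$ zeroes (hence $a+b$ symbols), and then proves that deleting it leaves a periodic sequence by exhibiting an explicit bijection $z\mapsto z+p+b-\frac1q$ between the sets $\mathcal B_r$ and $\mathcal B_{r+p+b}$ of lattice multiples counted by the inequalities~(\ref{S}); the type-$S'$ case is then reduced to type $S$ via the reciprocal-frequency conjugacy of Proposition~\ref{reciprocals} together with the observation that $(p-b,q-a)$ is the restricted B\'ezout pair for $(p,q)$. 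You instead work entirely in the cutting-sequence model, viewing $S(0,\alpha)$ as the periodic sequence $P$ glued to $P'$ at the origin and computing the phase shift between $P$ and $P'$ by translating the line by the lattice vector $(a,b)$, which is exactly the translation carrying $c=0$ to $c=1/q$, the first parameter at which new lattice points appear. This makes the appearance of the B\'ezout pair conceptually transparent, replaces the paper's bijection on the sets $\mathcal B_n$ with a single geometric observation, and handles type $S'$ symmetrically without passing through Proposition~\ref{reciprocals}; the price is the crossing-index bookkeeping (consistent anchoring of the three sequences $P$, $P'$, $S(0,\alpha)$ and the half-open tie-breaking), which you rightly flag as the delicate point. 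I checked that bookkeeping: the translation by $(a,b)$ moves indices by exactly $a+b$ (the segment from the origin pair to the $(a,b)$ pair contains $a$ vertical and $b$ horizontal crossings counted with the stated conventions), giving $P'=\sigma^{-(a+b)}P$ and hence anomaly size $a+b$, consistent with your test case $\alpha=\tfrac12$; your appeal to Proposition~\ref{indepOfWord} plus the bound $0<a+b<p+q$ correctly upgrades ``some anomaly word of length $a+b$'' to ``anomaly size equals $a+b$,'' and the primitivity argument from $\gcd(p,q)=1$ correctly gives least period $p+q$. So the plan is sound as written; to turn it into a full proof you need only write out the common indexing of the crossings explicitly.
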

\begin{proof}
We first assume that $x$ is of type $S$ with frequency $\alpha = \frac{q}{p}$. We may further assume by Remark \ref{stypes} that $x$ may be represented by $S(0,\alpha) = \cdots B_{-1} B_0 B_1\cdots$. By \cite[Theorems 3.3 and 4.8]{hedlund}, the beam preceding $B_0$ in $x$ has least cell-period $p$, and the beam following $B_0$ in $x$ has least cell-period $p$ as well. For $(a,b)$ the restricted B\'ezout coefficients of $(q,p)$, we will show that the chain $B_b\cdots B_{b+p-1}$ is identical to the chain $B_{-p}\cdots B_{-1}$, which indicates that removing $B_{0}\cdots B_{b-1}$ yields a periodic bi-infinite sequence.

First we calculate the size of this chain in terms of symbols. By \cite[Theorems 3.3 and 4.8]{hedlund}, the $p$-chain $B_0\cdots B_{p-1}$ is the unique $p$-chain containing $q-1$ $0$'s, while all others contain $q$ $0$'s. Therefore the $bp$-chain $B_0\cdots B_{bp - 1}$ contains $bq - 1 = ap$ $0$'s. Consider breaking this chain into $p$ consecutive $b$-chains $B_0\cdots B_{b-1}$, $B_b\cdots B_{2b-1}$, and so on. We see that if any of those $b$-chains did not contain precisely $a$ $0$'s, we would have a violation of the Sturmian condition. Thus the $b$-chain $B_0\cdots B_{b-1}$ consists of $a+b$ symbols.

We will now show that the chain $B_b\cdots B_{b+p-1}$ is identical to the chain $B_{-p}\cdots B_{-1}$. Recall that the number of $0$'s in the cell $B_n$ of $S(0, \alpha)$ is identical to the number of multiples of $\beta = \frac{p}{q}$ satisfying the inequalities (\ref{S}) above, with $m = 0$. Denote the set of such multiples as $\mathcal{B}_n$. Then we will show that for $r\in\{-p, \cdots -1\}$, the formula $f(z) = z+p+b-\frac{1}{q}=z+p+a\frac{p}{q}$, defines the desired bijection from $\mathcal{B}_r$ onto $\mathcal{B}_{r+p+b}$. 
 Indeed, to verify that $f(\mathcal{B}_r)\subseteq\mathcal{B}_{r+p+b}$, we let  $z = k\beta \in\mathcal{B}_r$ be arbitrary, where $k\in\mathbb{Z}$. We see that $f(z)  = (k+q+a)\dfrac{p}{q}$, so that $f$ indeed takes multiples of $\beta$ to other multiples of $\beta$. 
 Further, we know that $r < \beta k \leq r+1$. Therefore we may say that $r +p +b  < \beta k + p+ b = f(z)+\dfrac{1}{q}\leq r+p+b+1$ and so $r +p +b -\dfrac{1}{q} < f(z)\leq r+p+b+1-\dfrac{1}{q}<r+p+b+1$.
Since  $f(z)$ is a multiple of $\frac{1}{q}$, the last inequality yields $r+p+b\leq f(z)<r+p+b+1$, that is,  $f(\beta k)\in\mathcal{B}_{r+p+b}$, as wanted. The proof of the other inclusion is similar. 



We now assume that we have a skew Sturmian sequence $y$ of type $S'$ with frequency $\frac{q}{p}$. By Proposition \ref{reciprocals}, the subshift generated by $y$ is conjugate by symbol reversal to a subshift generated by a skew Sturmian sequence $z$ with frequency $\frac{p}{q}$ and type $S$. Then because this conjugacy simply exchanges $0$'s and $1$'s, we see that $y$ and $z$ have the same anomaly size. (This is slightly stronger than using the result of Lemma \ref{conjToAnom}, which would only guarantee that the sizes were the same mod $p+q$.) We must then only calculate the anomaly size of $z$. Notice that if $(a,b)$ are the restricted B\'ezout coefficients for $(q,p)$ as above, then $p(q-a) - q(p-b) = 1$.
Further, we see that $0<q-a \leq q$ and $0\leq p-b < p$. Then $(p-b, q-a)$ are the restricted B\'ezout coefficients for $(p,q)$. By the first part of the proof, since $z$ is of type $S$, we see that it has anomaly size $p-b+q-a = (p+q) - (a+b)$.
\end{proof}

Now that we may calculate the anomaly size of an arbitrary skew Sturmian sequence, we would like to use the results of Section 2 to classify them. We first need to compare skew Sturmian sequences with the same least period.

\begin{proposition}\label{noConjugacies}
Let $p$ and $q$ be relatively prime positive integers. Let $c\neq 0$ be an integer so that $q-c$ and $p+c$ are also relatively prime positive integers. Then any two skew Sturmian subshifts of the same type with frequencies $\dfrac{q}{p}$ and $\dfrac{q-c}{p+c}$ respectively have different anomaly sizes.
\end{proposition}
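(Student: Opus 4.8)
The plan is to reduce the whole statement to a single clean congruence for the anomaly size modulo the common least period. First I would note that the two frequencies have the same numerator-plus-denominator sum, since $(p+c)+(q-c)=p+q$; hence by Proposition~\ref{anomalysize} both subshifts have least period $N=p+q$, and for each frequency the type-$S'$ anomaly size is obtained from the type-$S$ one by the substitution $s\mapsto N-s$. It therefore suffices to prove the claim when both sequences are of type $S$: if $s_1,s_2$ denote the two type-$S$ anomaly sizes, then the type-$S'$ anomaly sizes are $N-s_1$ and $N-s_2$, and $N-s_1\neq N-s_2$ exactly when $s_1\neq s_2$.

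The key computation is a reformulation of the type-$S$ anomaly size. Writing $(a,b)$ for the restricted B\'ezout coefficients of $(q,p)$, so that $bq-ap=1$ with $0\le a<q$ and $0<b\le p$ by Lemma~\ref{bezout}, and setting $s=a+b$, I would substitute $b=s-a$ to obtain $sq-a(p+q)=1$, that is, $sq\equiv 1\pmod{p+q}$. The bounds on $a$ and $b$ force $1\le s\le p+q-1$, and $\gcd(q,p+q)=\gcd(q,p)=1$, so this pins $s$ down as the unique representative of $q^{-1}$ modulo $N=p+q$ lying in $\{1,\dots,N-1\}$. Thus the type-$S$ anomaly size for frequency $\frac{q}{p}$ satisfies $s_1\equiv q^{-1}\pmod N$, and the same argument applied to $\frac{q-c}{p+c}$ (whose modulus is again $N$) gives $s_2\equiv (q-c)^{-1}\pmod N$.

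It then remains to show $s_1\neq s_2$. Since both integers lie in $\{1,\dots,N-1\}$, equality of the integers is equivalent to equality modulo $N$, and $q^{-1}\equiv(q-c)^{-1}\pmod N$ holds if and only if $q\equiv q-c\pmod N$, i.e. $c\equiv 0\pmod N$. The hypotheses $q-c>0$ and $p+c>0$ confine $c$ to the open interval $(-p,q)$, whose length is exactly $N=p+q$ and which contains $0$ as its only multiple of $N$. Since $c\neq 0$, we conclude $c\not\equiv 0\pmod N$, so $s_1\neq s_2$, and the type-$S'$ case follows from the first paragraph.

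The main obstacle is really the identification in the second paragraph: recognizing that $a+b$ is forced to equal $q^{-1}$ modulo $p+q$ is what collapses the problem to a one-line interval argument. Once that congruence is in hand, the positivity constraints on $q-c$ and $p+c$ do all the remaining work, so no delicate case analysis or continued-fraction machinery is needed.
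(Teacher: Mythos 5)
Your proof is correct, but it is organized around a different key observation than the paper's. The paper takes the restricted B\'ezout coefficients $(a,b)$ of $(q,p)$ and $(e,f)$ of $(q-c,p+c)$, supposes $a+b=e+f$, writes $e=a-d$, $f=b+d$, and expands the identity $f(q-c)-e(p+c)=1$ to get $d(p+q)=c(a+b)$; it then invokes the ``furthermore'' clause of Lemma~\ref{bezout} (that $a+b$ and $p+q$ are coprime) to force $(p+q)\mid c$, and concludes with the same observation you make, that $-p<c<q$ leaves no room for a nonzero multiple of $p+q$. Your route instead identifies the type-$S$ anomaly size intrinsically: rearranging $bq-ap=1$ as $(a+b)q-a(p+q)=1$ shows $a+b$ is the unique representative of $q^{-1}\bmod(p+q)$ in $\{1,\dots,p+q-1\}$, so equal anomaly sizes immediately give $q\equiv q-c\pmod{p+q}$. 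This buys you two things: you never need the coprimality of $a+b$ and $p+q$ (so the ``furthermore'' part of Lemma~\ref{bezout} becomes unnecessary for this proposition), and the reduction of the $S'$ case to the $S$ case is a one-line consequence of the formula $p+q-(a+b)$ in Proposition~\ref{anomalysize}, whereas the paper routes it through the conjugacy of Proposition~\ref{reciprocals}. The paper's version stays closer to the B\'ezout-coefficient bookkeeping already set up; yours isolates the cleaner invariant. Both are complete and end with the identical interval argument.
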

\begin{proof}
Let the ordered pairs $(a,b)$ and $(e,f)$ be the restricted B\'ezout coefficients of the ordered pairs $(q,p)$ and $(q-c,p+c)$, respectively.

Let $x$ and $y$ be two skew Sturmian sequences with frequencies $\dfrac{q}{p}$ and $\dfrac{q-c}{p+c}$, respectively. Let us first assume that $x$ and $y$ are both of type $S$. For the sake of contradiction, assume they have the same anomaly size. Then, by Proposition~\ref{anomalysize}, 
there must exist an integer $d$ such that  $e=a-d$ and $f=b+d$. Therefore,
\begin{align*}
1 &= (b+d)(q-c) - (a-d)(p+c)
\\&= 1 - c(a+b) + d(p+q).
\end{align*}
We obtain $d(p+q) = c(a+b)$. Notice that if $d = 0$, then it must also be that $c(a+b) = 0$, and so either $c=0$ or $a+b = 0$. Since both of these are impossible, it must be that $d\neq 0$. Since $d\neq0$, we may say that $a\neq e$ and that $b\neq f$.  Thus $\dfrac{p+q}{a+b} = \dfrac{c}{d}$, and since $p+q$ and $a+b$ are relatively prime by Lemma \ref{bezout} we conclude that $c = k(p+q)$ for some integer $k\neq 0$. This implies that either $q-c$ or $p+c$ is negative, both of which are contradictions.

By Proposition~\ref{reciprocals}, we see that the result holds for $x$ and $y$ of type $S'$ as well.
\end{proof}

Note that because $S(m, \infty)$ and $S'(m, 0)$ are the only skew Sturmian sequences of least period $1$, their subshifts cannot be conjugate to any subshifts of positive real frequency. Further, it is clear that the subshifts generated by $S(m,\infty)$ and $S'(m, 0)$ are conjugate. 

We finally prove the main results of this section. For the first corollary we consider $S(m, \infty)$ and $S'(m, 0)$ to have inverse frequencies.

\begin{corollary}
Let $X$ and $Y$ be skew Sturmian subshifts induced by non similar skew Sturmian sequences. Then $X$ and $Y$ are conjugate if and only if they have inverse frequencies and are of opposite type. 
\end{corollary}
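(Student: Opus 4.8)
The plan is to express everything through the two conjugacy invariants furnished by Theorem~\ref{anomIffConj}---the least period $N$ and the anomaly size modulo $N$---and to translate these back into frequency and type using Proposition~\ref{anomalysize}. The ($\Leftarrow$) direction is then essentially already in hand: if $X$ and $Y$ have inverse frequencies and opposite type, then either both have positive finite frequency, in which case they are conjugate by Proposition~\ref{reciprocals} applied verbatim, or one is $S(m,\infty)$ and the other $S\sp\prime(m,0)$, whose subshifts are conjugate by the remark preceding the corollary. So the substance lies in the ($\Rightarrow$) direction.

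For ($\Rightarrow$), assume $X$ and $Y$ are conjugate, with generators $x$ of frequency $\frac{q}{p}$ and $y$ of frequency $\frac{q\sp\prime}{p\sp\prime}$ in lowest terms. By Theorem~\ref{anomIffConj} the least periods agree, so $p+q=p\sp\prime+q\sp\prime$; call this common value $N$. First I would dispose of the degenerate case $N=1$: then $x$ and $y$ are among the only least-period-$1$ skew Sturmian sequences $S(m,\infty)$ and $S\sp\prime(m,0)$, and being non-similar they must be these two distinct sequences, which have opposite type and, by the stated convention, inverse frequencies $\infty$ and $0$. For $N\geq 2$ both $x$ and $y$ necessarily have positive finite frequency, and I proceed to analyze their types.

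The core step is to show the types are opposite. If $x$ and $y$ had the \emph{same} type but different frequency, then writing $q\sp\prime=q-c$ and $p\sp\prime=p+c$ with $c\neq 0$ (legitimate since $p+q=p\sp\prime+q\sp\prime$), Proposition~\ref{noConjugacies} gives them different anomaly sizes. Here I would invoke the sharp bounds $1\leq a+b\leq N-1$ for the restricted B\'ezout sum, which follow from $0\leq a<q$ and $0<b\leq p$ in Lemma~\ref{bezout}: every positive-frequency anomaly size lies in $\{1,\dots,N-1\}$ by Proposition~\ref{anomalysize}, so two different such values are different modulo $N$, contradicting Theorem~\ref{anomIffConj}. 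Same type and same frequency would instead make $x$ and $y$ similar by Remark~\ref{stypes}, against hypothesis. Hence the types must be opposite.

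Finally, with opposite types I would route $Y$ through Proposition~\ref{reciprocals} to reduce to the same-type case and thereby extract inverse frequencies. Say $x$ is of type $S$ with frequency $\frac{q}{p}$ and $y$ is of type $S\sp\prime$ with frequency $\frac{q\sp\prime}{p\sp\prime}$. Proposition~\ref{reciprocals} produces a type-$S$ sequence $z$ of frequency $\frac{p\sp\prime}{q\sp\prime}$ whose subshift is conjugate to $Y$, hence to $X$. Now $x$ and $z$ are both of type $S$ and their subshifts are conjugate, so by the same-type step above they share a frequency, $\frac{q}{p}=\frac{p\sp\prime}{q\sp\prime}$, which rearranges to $\frac{q\sp\prime}{p\sp\prime}=\frac{p}{q}$, the inverse of $x$'s frequency. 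I expect the main obstacle to be the bookkeeping in this last reduction together with the justification that ``different anomaly size'' upgrades to ``different modulo $N$''; the latter rests entirely on the bounds $1\leq a+b\leq N-1$, which I would verify first so that the appeals to Theorem~\ref{anomIffConj} are clean.
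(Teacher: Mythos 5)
Your proof is correct and takes essentially the same route as the paper's: the ($\Leftarrow$) direction is Proposition~\ref{reciprocals}, and the ($\Rightarrow$) direction combines Theorem~\ref{anomIffConj}, Proposition~\ref{anomalysize}, Proposition~\ref{noConjugacies}, and Proposition~\ref{reciprocals} exactly as the paper does. You simply make explicit two details the paper leaves implicit --- the bound $1\leq a+b\leq N-1$ needed to upgrade ``different anomaly sizes'' to ``different modulo $N$'', and the reduction of the opposite-type case to the same-type case via Proposition~\ref{reciprocals} --- which is a welcome clarification but not a different argument.
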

\begin{proof}
By Proposition~\ref{reciprocals}, we see that skew Sturmian subshifts with inverse frequencies and opposite type are conjugate. 

For the other implication, let us assume that $X$ and $Y$ are conjugate skew Sturmian subshifts induced by skew Sturmian sequence $x$ and $y$, respectively. Suppose that $X$ and $Y$  have frequencies $\frac{q}{p}$ and $\frac{q\sp\prime}{p\sp\prime}$, respectively.  By Proposition~\ref{anomalysize} and Theorem~\ref{anomIffConj} then $p+q = p'+q'$ and $a(x)\equiv a(y)\mod (p+q)$. Now, combining  Propositions~\ref{noConjugacies}, \ref{anomalysize}, and \ref{reciprocals} guarantees that either $x$ and $y$ are similar or else they are of opposite type and have inverse frequencies, as wanted.
\end{proof}

\begin{corollary}
If $X$ and $Y$ are skew Sturmian sequences then they are flow equivalent.
\end{corollary}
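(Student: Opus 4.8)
The plan is to reduce this corollary immediately to Theorem~\ref{floweq}, which asserts that all subshifts generated by eventually periodic sequences are flow equivalent. The only thing that must be checked is that every skew Sturmian sequence is eventually periodic, for then its generated subshift is by definition an eventually periodic subshift, and the result follows at once.

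First I would observe that the required fact about skew Sturmian sequences has already been established: Proposition~\ref{anomalysize} shows that any skew Sturmian sequence of type $S$ or $S\sp\prime$ with frequency $\frac{q}{p}$ is eventually periodic, with least period $p+q$ and an explicitly computed anomaly size. (This is also recorded in the literature, e.g.\ \cite[Lemma~6.2.2, p.~156]{fogg}, as noted in the remark preceding Proposition~\ref{anomalysize}.) The degenerate cases $S(m,\infty)$ and $S\sp\prime(m,0)$, corresponding to the sequences $\cdots 0001000\cdots$ and $\cdots 1110111\cdots$, are likewise eventually periodic, of least period $1$. Thus every skew Sturmian sequence, of either type and any frequency, is eventually periodic.

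Having verified this, I would let $X$ and $Y$ be the subshifts generated by any two skew Sturmian sequences $x$ and $y$. Since $x$ and $y$ are eventually periodic, $X$ and $Y$ are eventually periodic subshifts in the sense defined in Section~1, and Theorem~\ref{floweq} applies directly to give that $X$ and $Y$ are flow equivalent. I do not expect any genuine obstacle here; the entire content was front-loaded into Theorem~\ref{floweq} and Proposition~\ref{anomalysize}, so the corollary is essentially a matter of invoking them. The only minor point worth flagging is purely expository: the statement as written says ``skew Sturmian sequences,'' but what is meant is the \emph{subshifts} they generate, so I would phrase the argument in terms of $X$ and $Y$ to keep the hypothesis and conclusion aligned with the conventions of Section~1.
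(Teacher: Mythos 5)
Your proposal is correct and follows exactly the paper's route: the paper also proves this corollary as a direct consequence of Theorem~\ref{floweq} and Proposition~\ref{anomalysize}. Your additional remarks (handling the degenerate frequencies $S(m,\infty)$ and $S'(m,0)$, and reading ``sequences'' as the subshifts they generate) are sensible clarifications but do not change the argument.
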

\begin{proof}
A direct consequence of Theorem~\ref{floweq} and Proposition~\ref{anomalysize}.
\end{proof}

\end{document}